\documentclass[12pt,a4paper,reqno]{amsart}

\usepackage[T1]{fontenc}
\usepackage[utf8]{inputenc}
\usepackage{lmodern}
\usepackage{microtype}
\usepackage{amsmath,amssymb,amsthm,mathtools}
\usepackage[pdftex,pdfpagelabels]{hyperref}
\hypersetup{
  hidelinks,
  pdftitle={Extremal densities for forbidden configurations in S-smooth numbers},
  pdfauthor={Nikola Veselinov}
}
\usepackage[nameinlink,noabbrev]{cleveref}
\usepackage{xcolor}
\usepackage{enumitem}
\usepackage{booktabs}

\usepackage{comment}

\numberwithin{equation}{section}

\usepackage{aliascnt}

\newtheorem{theorem}{Theorem}[section]

\newaliascnt{proposition}{theorem}
\newtheorem{proposition}[proposition]{Proposition}
\aliascntresetthe{proposition}

\newaliascnt{lemma}{theorem}
\newtheorem{lemma}[lemma]{Lemma}
\aliascntresetthe{lemma}

\newaliascnt{corollary}{theorem}
\newtheorem{corollary}[corollary]{Corollary}
\aliascntresetthe{corollary}

\newaliascnt{example}{theorem}

\aliascntresetthe{example}

\newaliascnt{conjecture}{theorem}

\aliascntresetthe{conjecture}

\theoremstyle{definition}
\newaliascnt{defn}{theorem}
\newtheorem{defn}[defn]{Definition}
\crefname{corollary}{Corollary}{Corollaries}
\aliascntresetthe{defn}

\theoremstyle{remark}
\newaliascnt{remark}{theorem}

\aliascntresetthe{remark}

\newcommand{\Zg}{\mathbb{Z}_{\geq0}}
\newcommand{\Z}{\mathbb{Z}}
\newcommand{\R}{\mathbb{R}}
\newcommand{\Q}{\mathbb{Q}}
\newcommand{\1}{\mathbf{1}}
\newcommand{\cS}{c_S}
\newcommand{\kS}{\kappa_S}
\newcommand{\FS}{F_S}
\newcommand{\GS}{G_S}
\newcommand{\PsiS}{\Psi_S}
\newcommand{\SigmaS}{\Sigma_S}
\newcommand{\fS}{f_S}
\newcommand{\alphaS}{\alpha_S}
\newcommand{\Deltaf}{\Delta f}
\newcommand{\ds}{d^{(S)}}

\newcommand{\Ss}{\texorpdfstring{$S$}{S}}

\title{Extremal densities for forbidden configurations in $S$-smooth numbers}
\author{Nikola Veselinov}
\address{Independent researcher}
\email{nikola.veselinov.veselinov@gmail.com}
\subjclass[2020]{11B75, 11N25, 05D05}
\keywords{smooth numbers, forbidden configurations, extremal density, combinatorial number theory, pattern avoidance}

\begin{document}

\begin{abstract}
  Let $S = \{p_1,\dots,p_r\}$ be a finite set of distinct primes, let $\PsiS(X)$ be the number of $S$-smooth integers not exceeding $X$, and let $\FS(X)$ be the maximum size of a subset of $M(S) \cap [1,X]$ containing no set $\{n,p_1 n,\dots,p_r n\}$. We prove that
  $
    \FS(X)=\frac{r}{r+1}\PsiS(X)+O_S\bigl((\log X)^{r-1}\bigr)
  $
  as $X \to \infty$, and equivalently that
  $
    \fS(k)=\frac{r}{r+1}k+O_S\bigl(k^{(r-1)/r}\bigr)
  $
  for the corresponding extremal function on the first $k$ $S$-smooth numbers. We also relate this problem to the analogous extremal problem on the full interval $[1,N]$. Using the classical theory of such forbidden configurations, we obtain a representation of the corresponding density constant $\alpha_S$ in terms of the increments of $f_S$, along with nested computable bounds and a recursive formula for the reciprocal tail over $S$-smooth numbers. We further show that rational reciprocal sums over $S$-smooth denominators need not arise from eventually periodic binary sequences. In the classical case $S=\{2,3\}$, we derive an explicit tail formula and prove two structural propositions for optimal sets.
\end{abstract}

\maketitle

\section{Introduction}

The extremal problem
\[
  F(N)\coloneq \max\bigl\{|A|:A\subseteq\{1,\dots,N\},\ A\text{ contains no set }\{x,2x,3x\}\bigr\}
\]
goes back to Graham, Witsenhausen, and Spencer \cite{GWS}, who proved the existence of the limit
\[
  \alpha\coloneq \lim_{N\to\infty}\frac{F(N)}{N}.
\]
More generally, density questions for sets avoiding or meeting prescribed multiplicative configurations were studied by Lucht \cite{Lucht} and by Chung, Erd\H{o}s, and Graham \cite{CEG}; see also Wang \cite{Wang}, Reznick and Holzsager \cite{RH}, Lai \cite{Lai}, and Leung and Wei \cite{LeungWei}. For background on smooth numbers we refer to Tenenbaum \cite{Tenenbaum}.

\smallskip

Fix a finite set of distinct primes $S=\{p_1,\dots,p_r\}$, $r\ge 2$, and write
\[
  M(S)\coloneq \{p_1^{a_1}\cdots p_r^{a_r}: a_1,\dots,a_r\in \Zg\}
\]
for the multiplicative semigroup generated by $S$. The elements of $M(S)$ are the $S$-smooth numbers, and
\[
  \PsiS(X)\coloneq |M(S)\cap [1,X]|
\]
denotes their counting function.

\smallskip

The main purpose of the present paper is to study the corresponding extremal problem in the multiplicative semigroup $M(S)$ of $S$-smooth numbers, i.e.,
\begin{equation*}
  \begin{aligned}
    \FS(X)\coloneq \max\bigl\{\,|A|: \  & A\subseteq M(S)\cap [1,X],                              \\
                                        & A\text{ contains no set }\{n,p_1n,\dots,p_r n\}\bigr\}.
  \end{aligned}
\end{equation*}
If $\ds_1<\ds_2<\cdots$ is the increasing sequence of $S$-smooth numbers, define
\begin{equation*}
  \begin{aligned}
    \fS(k)\coloneq \max\bigl\{\,|A|: \  & A\subseteq\{\ds_1,\dots,\ds_k\},                         \\
                                        & A\text{ contains no set }\{n,p_1 n,\dots,p_r n\}\bigr\}.
  \end{aligned}
\end{equation*}
In the case $k=0$, write $\fS(0)=0$ conventionally.

In exponent coordinates, the problem becomes one of avoiding unit corners in a weighted lattice simplex.

\smallskip

Our main result is an asymptotic formula for $\FS(X)$ and $\fS(k)$.

\begin{theorem}\label{thm:intro-smooth}
  Let $S=\{p_1,\dots,p_r\}$ be a finite set of distinct primes, $r\ge 2$. Then
  \[
    \FS(X)=\frac{r}{r+1}\PsiS(X)+O_S\bigl((\log X)^{r-1}\bigr)
  \]
  as $X\to\infty$. Moreover,
  \[
    \PsiS(X)=\frac{(\log X)^r}{r!\prod_{i=1}^r \log p_i}+O_S\bigl((\log X)^{r-1}\bigr),
  \]
  so that
  \[
    \FS(X)=\frac{r}{r+1}\cdot \frac{(\log X)^r}{r!\prod_{i=1}^r \log p_i}+O_S\bigl((\log X)^{r-1}\bigr).
  \]
  Equivalently,
  \[
    \fS(k)=\frac{r}{r+1}k+O_S\bigl(k^{(r-1)/r}\bigr).
  \]
\end{theorem}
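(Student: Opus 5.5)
Pass to exponent coordinates. Write $n=p_1^{a_1}\cdots p_r^{a_r}$ and set $\lambda_i=\log p_i$. Then $M(S)\cap[1,X]$ corresponds bijectively to the lattice points of the weighted simplex
\[
  T(X)=\Bigl\{a\in\Zg^r:\ \sum_i a_i\lambda_i\le \log X\Bigr\},
\]
and a forbidden configuration is a unit corner $\{a,a+e_1,\dots,a+e_r\}$. The count $\PsiS(X)=|T(X)|$ is then a standard lattice-point count in a dilated simplex, so $\PsiS(X)=\operatorname{vol}+O((\log X)^{r-1})$ with volume $(\log X)^r/(r!\prod\lambda_i)$. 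The error term is bounded by the number of unit cubes meeting the boundary, or equivalently by induction on $r$, summing over $a_r$.

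For the lower bound, color $\Zg^r$ by $c(a)=\sum_i a_i \bmod (r+1)$. The points of a unit corner have coordinate sums $s,s+1,\dots,s+1$, so this alone does not separate them. Instead use the coloring $c(a)=\sum_i i\,a_i \bmod (r+1)$. Under it the corner $\{a,a+e_1,\dots,a+e_r\}$ receives the colors $c, c+1,\dots,c+r$, which are all residues mod $r+1$. Hence deleting any one color class leaves a corner-free set. On $T(X)$ the color classes are equidistributed up to boundary error: partition $T(X)$ into runs along the $e_1$ direction. Each run is a consecutive set of integers in $a_1$ at fixed $(a_2,\dots,a_r)$, so each residue class of the run receives $\lfloor \ell/(r+1)\rfloor$ or $\lceil\ell/(r+1)\rceil$ of its elements. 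There are $O((\log X)^{r-1})$ runs, so each class has size $\PsiS(X)/(r+1)+O((\log X)^{r-1})$. Removing the smallest class gives $\FS(X)\ge \frac{r}{r+1}\PsiS(X)-O((\log X)^{r-1})$.

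For the upper bound, take a corner-free $A\subseteq T(X)$ and exhibit a large family of pairwise disjoint corners inside $T(X)$; each must miss a point of $A$. Tile $\Zg^r$ by translates of the corner $C_0=\{0,e_1,\dots,e_r\}$. The coloring above gives such a tiling: the corners $\{a,a+e_1,\dots,a+e_r\}$ with $c(a)\equiv 0$ are pairwise disjoint. Indeed, a point $b$ lies in the corner at $a$ only if $a=b$ (so $c(b)=0$) or $a=b-e_i$ (so $c(b)=i$), which determines $a$ uniquely. They also cover every point with nonnegative preimage, since $C_0$ tiles $\Z^r$ via the lattice $\{a: c(a)\equiv0\}$. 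Every point of $T(X)$ at distance more than $\max_i\lambda_i$ (in the weighted sum) from the slanted face, and with all $a_i\ge1$ (so that $b-e_i\ge 0$ is available), lies in a tile fully contained in $T(X)$. The uncovered points number $O((\log X)^{r-1})$. Each full tile contributes at most $r$ points of $A$, so $|A|\le \frac{r}{r+1}\PsiS(X)+O((\log X)^{r-1})$.

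Finally convert to $\fS$. Take $X=\ds_k$, so $k=\PsiS(X)$ and $\fS(k)=\FS(\ds_k)$. Since $\log X\asymp k^{1/r}$ by the asymptotic for $\PsiS$, the error $O((\log X)^{r-1})$ becomes $O(k^{(r-1)/r})$. The main obstacle I expect is the careful accounting of boundary points in the tiling step, in particular points on the coordinate hyperplanes $a_i=0$. A point $b$ with $c(b)=i$ but $b_i=0$ is not covered by a tile within $\Zg^r$. Such points lie in the faces $\{a_i=0\}$, which are $(r-1)$-dimensional, so they contribute only $O((\log X)^{r-1})$, as needed.
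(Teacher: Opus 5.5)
Your proof is correct. The lattice count, the coloring $c(a)=\sum_i i\,a_i \bmod (r+1)$, and the lower bound are essentially the paper's. You equidistribute along $e_1$-lines rather than $e_r$-lines, which is immaterial. Note also that pigeonhole on the $r+1$ classes already gives $F_S(X)\ge\frac{r}{r+1}\Psi_S(X)$ with no error term.

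Where you differ is the upper bound. The paper does not select disjoint corners. It sums over \emph{all} corners anchored at points $x\in\Sigma_S(u)$, where $u=\log X$; each such corner contains at most $r$ points of $A$. Corners protruding through the slanted face cause no trouble, because points outside $\Sigma_S(u)$ are not in $A$. Each $a\in A$ is then counted once as an anchor and once for each $i$ with $a_i\ge 1$. Via the backward shifts $A^{(i)}$, this yields the exact identity $(r+1)|A|=r|\Sigma_S(u)|+\sum_i Z_i(A)-D(A)$ with $D(A)\ge 0$.

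You instead use a single perfect packing: the corners anchored on the sublattice $\{c\equiv 0\}$, discarding tiles near the boundary. The two arguments are closely related. Suppose that, instead of discarding tiles, you write $|A|$ exactly as the sum over anchors $x\in\Sigma_S(u)$ with $c(x)\equiv j$ of $|A\cap\{x,x+e_1,\dots,x+e_r\}|$, plus the number of $b\in A$ with $b_i=0$ for $i\equiv c(b)-j\not\equiv 0$. Averaging this over $j=0,\dots,r$ recovers precisely the paper's identity.

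What each version buys:
\begin{itemize}
\item The averaged (paper's) version is choice-free. Its only error is $\frac{1}{r+1}\sum_i F_i(u)$ from the coordinate faces, so it needs neither a slanted-boundary estimate nor equidistribution. It also produces a defect term $D(A)$ that constrains near-extremal sets.
\item Your single tiling makes the duality visible: the same index-$(r+1)$ sublattice yields both the extremal construction (delete one coset) and a packing certificate for its optimality. The cost is the discarded layer $u-\max_i\lambda_i<\sum_i a_i\lambda_i\le u$. Its size, $|\Sigma_S(u)|-|\Sigma_S(u-\max_i\lambda_i)|=O_S(u^{r-1})$, follows from the lattice-count asymptotic, and you should state this explicitly rather than assert it.
\end{itemize}
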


The proof combines a modular coloring argument for the lower bound with an identity for corner-free sets in the weighted lattice simplex, which yields the upper bound.

\medskip

We also consider the connection with the problem on the full interval $[1,N]$, where
\[
  G_S(N)\coloneq \max\bigl\{|A|:A\subseteq [1,N],\ A\text{ contains no set }\{n,p_1n,\dots,p_rn\}\bigr\}.
\]
This leads to a representation of the associated density constant $\alpha_S$, and hence to explicit computable bounds.

Denote
\[
  P_S\coloneq \prod_{p\in S} p,\qquad \cS\coloneq \frac{\varphi(P_S)}{P_S}=\prod_{p\in S}\left(1-\frac1p\right).
\]

\begin{corollary}\label{cor:intro-full-density}
  Let $S=\{p_1,\dots,p_r\}$ be a finite set of distinct primes, $r\ge 2$. Then there exists a constant $\alphaS$ such that
  \[
    \GS(N)=\alphaS N+O_S\bigl((\log N)^r\bigr)
  \]
  as $N\to\infty$. In particular,
  \[
    \alphaS=\cS\sum_{k\ge 1}\frac{\Deltaf_S(k)}{\ds_k},\qquad \Deltaf_S(k)\coloneq \fS(k)-\fS(k-1)\in\{0,1\}.
  \]
\end{corollary}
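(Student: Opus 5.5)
Every $n\le N$ factors uniquely as $n=m\,s$ with $s\in M(S)$ and $\gcd(m,P_S)=1$. A configuration $\{n,p_1n,\dots,p_rn\}$ stays inside a single fiber $m\cdot M(S)$, since multiplying by $p_i$ changes only the smooth part. So the constraint decouples across fibers. A set $A\subseteq[1,N]$ is admissible exactly when, for each $m$ coprime to $P_S$, the set $\{s: ms\in A\}$ is an admissible subset of $M(S)\cap[1,N/m]$. Maximizing fiber by fiber gives the exact identity
\[
  \GS(N)=\sum_{\substack{m\le N\\ \gcd(m,P_S)=1}} \FS(N/m).
\]
I will take this decomposition as the starting point.

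\emph{Swapping the order of summation.} Write $\FS(Y)=\fS(\PsiS(Y))=\sum_{k:\,\ds_k\le Y}\Deltaf_S(k)$. The increments satisfy $\Deltaf_S(k)\in\{0,1\}$ because $\fS$ is nondecreasing and adding one point raises the maximum by at most one. Swapping sums gives
\[
  \GS(N)=\sum_{k:\,\ds_k\le N}\Deltaf_S(k)\,\Phi_S(N/\ds_k),
\]
where $\Phi_S(Y)=\#\{m\le Y:\gcd(m,P_S)=1\}$. By inclusion--exclusion, or periodicity mod $P_S$, we have $\Phi_S(Y)=\cS Y+O_S(1)$ uniformly for $Y\ge 1$.

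\emph{Main term and error.} Substituting, the main term is $\cS N\sum_{\ds_k\le N}\Deltaf_S(k)/\ds_k$. The error is $O_S(\PsiS(N))=O_S((\log N)^r)$ by \cref{thm:intro-smooth}. Completing the series costs $\cS N\sum_{\ds_k>N}1/\ds_k$. I will bound this tail via a dyadic decomposition: $M(S)\cap(2^jN,2^{j+1}N]$ has $O_S((\log 2^{j}N)^{r-1}\cdot\log 2+1)$ elements. This follows from the asymptotic for $\PsiS$ in \cref{thm:intro-smooth}, whose increment over a dyadic interval is $O((\log X)^{r-1})$. Hence the tail is $\ll N^{-1}\sum_j 2^{-j}(\log N+j)^{r-1}\ll N^{-1}(\log N)^{r-1}$, so the completion error is $O_S((\log N)^{r-1})$. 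This yields $\GS(N)=\alphaS N+O_S((\log N)^r)$ with $\alphaS=\cS\sum_{k\ge1}\Deltaf_S(k)/\ds_k$, and the series converges since $\sum_{s\in M(S)}1/s=\prod_{p\in S}(1-1/p)^{-1}<\infty$.

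\emph{Main obstacle.} The step most likely to need care is the fiber decomposition: one must verify that the maximizers in different fibers can be chosen independently and that the fiber of $m$ is exactly $M(S)\cap[1,N/m]$, including when $N/m$ is not an integer. The other delicate point is uniformity of the $O(1)$ in $\Phi_S$ and of the dyadic bound on smooth-number increments. Both follow from counting lattice points in a simplex, as in the proof of the $\PsiS$ asymptotic, so I expect them to be routine once set up.
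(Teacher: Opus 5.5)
Your proposal is correct and matches the paper's derivation step for step: the fiber decomposition of \Cref{prop:component-decomposition}, the swap to $\sum_k \Deltaf_S(k)\,C_S(N/\ds_k)$ with $C_S(x)=\cS x+O_S(1)$, the $O_S(\PsiS(N))$ error, and completion of the series. The only variation is the tail $N\sum_{\ds_k>N}1/\ds_k$: you bound it by dyadic blocks using only the increments of $\PsiS$, whereas the paper cites its partial-summation asymptotic for $T_S(X)$ (\Cref{prop:tail-asymptotic}); both give $O_S((\log N)^{r-1})$, which the $(\log N)^r$ error term absorbs.
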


The representation in \Cref{cor:intro-full-density} may also be deduced from \cite[Theorem 2]{GWS} by applying that result to the coefficient set $\{1,p_1,\dots,p_r\}$ and passing from hitting sets to configuration-free sets by complementation. For the convenience of the reader, we include a direct derivation in the present notation.

In turn, this representation yields nested computable bounds for $\alpha_S$.

\begin{corollary}\label{cor:intro-bounds}
  For $N\ge 1$, let
  \[
    L_{S,N}\coloneq \cS\sum_{k=1}^N \frac{\Deltaf_S(k)}{\ds_k},
    \qquad
    U_{S,N}\coloneq L_{S,N}+\cS\sum_{k>N}\frac1{\ds_k}.
  \]
  Then
  $
    L_{S,N}\le \alphaS\le U_{S,N},
  $
  and the intervals are nested, i.e.,
  \[
    L_{S,N}\le L_{S,N+1}\le \alphaS\le U_{S,N+1}\le U_{S,N}.
  \]
  Their width is exactly
  \[
    U_{S,N}-L_{S,N}=\cS\sum_{k>N}\frac1{\ds_k}.
  \]
\end{corollary}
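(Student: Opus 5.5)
The plan is to read everything off the series representation in \Cref{cor:intro-full-density},
\[
  \alphaS=\cS\sum_{k\ge1}\frac{\Deltaf_S(k)}{\ds_k},
\]
together with two elementary facts. First, $\Deltaf_S(k)\in\{0,1\}$, which is part of \Cref{cor:intro-full-density}. Second, the full reciprocal series $\sum_{k\ge1}1/\ds_k=\prod_{p\in S}(1-1/p)^{-1}=1/\cS$ converges, by the Euler product over the finitely many primes in $S$. Since $\Deltaf_S(k)\le 1$, the series for $\alphaS$ is dominated termwise by a convergent series, so it converges. Moreover, every tail $\sum_{k>N}1/\ds_k$ is finite, so $U_{S,N}$ is well defined.

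For the two-sided bound, I split the series for $\alphaS$ at index $N$:
\[
  \alphaS=L_{S,N}+\cS\sum_{k>N}\frac{\Deltaf_S(k)}{\ds_k}.
\]
The tail is a sum of nonnegative terms, so $\alphaS\ge L_{S,N}$. Using $\Deltaf_S(k)\le1$ in each term of the tail gives $\alphaS\le L_{S,N}+\cS\sum_{k>N}1/\ds_k=U_{S,N}$.

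For nestedness, I compare consecutive indices directly. We have $L_{S,N+1}-L_{S,N}=\cS\,\Deltaf_S(N+1)/\ds_{N+1}\ge0$. For the upper bounds,
\[
  U_{S,N}-U_{S,N+1}=\cS\,\frac{1-\Deltaf_S(N+1)}{\ds_{N+1}}\ge0,
\]
which again uses $\Deltaf_S(N+1)\le1$. Combined with the bounds $L_{S,N+1}\le\alphaS\le U_{S,N+1}$ from the previous paragraph, this gives the full chain. The identity for the width $U_{S,N}-L_{S,N}$ holds by definition of $U_{S,N}$.

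I do not expect a real obstacle. The only points needing care are that $\Deltaf_S(k)\in\{0,1\}$, which we take from \Cref{cor:intro-full-density}, and the convergence of the reciprocal sum of $S$-smooth numbers, which the Euler product settles. If one wants to check $\Deltaf_S(k)\in\{0,1\}$ independently, it follows because $\fS$ is nondecreasing, and removing $\ds_{k}$ from an optimal set for $k$ leaves a configuration-free subset of the first $k-1$ smooth numbers, so $\fS(k)\le\fS(k-1)+1$.
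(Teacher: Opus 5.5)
Your proof is correct and follows the paper's argument essentially line for line: you split the series for $\alphaS$ at $N$, use $0\le\Deltaf_S(k)\le 1$ for both bounds, and get nestedness from the consecutive differences $L_{S,N+1}-L_{S,N}$ and $U_{S,N}-U_{S,N+1}$. Your independent check that $\Deltaf_S(k)\in\{0,1\}$ (monotonicity of $\fS$, plus deleting $\ds_k$ from an optimal set) is a useful addition, since the paper asserts this fact without proof.
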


In particular, the decimal expansion of $\alphaS$ can be computed to arbitrary precision.

\smallskip

Our next result shows that rational reciprocal sums over $S$-smooth denominators do not necessarily arise from eventually periodic binary sequences. In particular, eventual periodicity is not a necessary condition for rationality in this class of reciprocal sums.

\begin{proposition}\label{prop:intro-nonperiodic}
  Let $S$ be any finite set of primes with $|S|\ge 2$, and let
  $
    \ds_1<\ds_2<\cdots
  $
  be the increasing sequence of positive $S$-smooth numbers. Then there exists a binary sequence $(c_k)_{k\ge 1}$ such that
  \[
    \sum_{k=1}^\infty \frac{c_k}{\ds_k}\in \Q,
  \]
  but $(c_k)$ is not eventually periodic.
\end{proposition}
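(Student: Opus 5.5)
The plan is to construct $(c_k)$ so that the sum equals an explicit rational number, using the multiplicative structure of $M(S)$. Pick two distinct primes $p,q\in S$. Split $M(S)$ into the set $T$ of elements not divisible by $p$ and its complement $pM(S)$, so that $M(S)=T\sqcup pM(S)$. Geometric series give
\[
\sum_{n\in M(S)}\frac1n=\prod_{s\in S}\frac{s}{s-1}\in\Q,\qquad \sum_{n\in T}\frac1n=\prod_{s\in S\setminus\{p\}}\frac{s}{s-1}\in\Q .
\]
Both are rational. So any indicator of a set that is a finite Boolean combination of such "divisibility-by-prime-power" sets yields a rational sum. The goal is to choose such a set whose indicator, read along the increasing enumeration $\ds_1<\ds_2<\cdots$, is not eventually periodic.

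Concretely, take $c_k=1$ if $p\nmid \ds_k$ and $c_k=0$ otherwise. Then $\sum_k c_k/\ds_k=\prod_{s\ne p} s/(s-1)\in\Q$. It remains to show that $(c_k)$ is not eventually periodic. The zeros of $(c_k)$ correspond to elements of $pM(S)$, and the ones to elements of $T$. If $(c_k)$ were eventually periodic with period $P$, the proportion of ones among $c_1,\dots,c_K$ would converge to a rational limit and, more to the point, would satisfy $\#\{k\le K:c_k=1\}=\theta K+O(1)$. We have $\#\{k\le K: c_k=1\}=|T\cap[1,\ds_K]|=\Psi_{S\setminus\{p\}}(\ds_K)$, while $K=\PsiS(\ds_K)$. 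Setting $X=\ds_K$, the counting estimate in \Cref{thm:intro-smooth} (for $|S\setminus\{p\}|\ge 2$; when $r-1=1$, $\Psi_{\{q\}}(X)=\log X/\log q+O(1)$ directly) gives $\Psi_{S\setminus\{p\}}(X)\asymp(\log X)^{r-1}$, whereas $\PsiS(X)\asymp(\log X)^r$. Hence the density of ones tends to $0$. An eventually periodic binary sequence with density $0$ of ones must be eventually $0$. But $T$ is infinite (it contains all powers of $q$), so $(c_k)$ has infinitely many ones, a contradiction.

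The main obstacle is only making the counting comparison rigorous. I would state the asymptotic for $\Psi$ in the form $\Psi_{S'}(X)\asymp_{S'}(\log X)^{|S'|}$ for every nonempty $S'$, which follows from the lattice-point count in the simplex $\{a\in\Zg^{|S'|}:\sum a_i\log p_i\le\log X\}$ as in \Cref{thm:intro-smooth}. The rest is elementary: periodicity forces the density of ones to exist and to equal (number of ones in a period)$/P$. That number is positive whenever ones occur infinitely often, which contradicts density zero. Rationality needs only absolute convergence and the geometric-series factorization of the Euler product over $S\setminus\{p\}$.
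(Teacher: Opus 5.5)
Your argument is correct, but it differs from the paper's in both the choice of sequence and the mechanism for ruling out periodicity.

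The paper sets $c_k=1$ exactly when $\ds_k=p^m$ with $m\ge1$, so the sum is $1/(p-1)$. It then uses a gap argument. Because $\log_p q$ is irrational, the numbers $p^{\lceil m-b\log_p q\rceil}q^b$ for $1\le b\le\lfloor m/\log_p q\rfloor$ are distinct $S$-smooth numbers strictly between $p^m$ and $p^{m+1}$. Hence the index gaps between consecutive ones are unbounded, which is impossible for an eventually periodic sequence with infinitely many ones.

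You instead take the indicator of $M(S\setminus\{p\})$. You show the ones have relative density zero by comparing $\Psi_{S\setminus\{p\}}(X)\ll(\log X)^{r-1}$ with $\PsiS(X)\gg(\log X)^{r}$ via \Cref{lem:lattice-count}. For $|S|=2$ your set is just $\{q^m:m\ge0\}$, so the two constructions agree up to swapping $p$ and $q$ and the term $n=1$. They genuinely differ only when $|S|\ge3$.

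The paper's proof is fully elementary and explicit: it uses only two primes and the irrationality of $\log_p q$, with no lattice-point asymptotics. Yours relies on the counting lemma but is more flexible. The criterion ``infinitely many ones but relative density zero'' rules out eventual periodicity for the indicator of any infinite, relatively sparse subset of $M(S)$. For instance, it covers $M(S')$ for any nonempty $S'\subsetneq S$, whose reciprocal sum is again rational, and it does so without exhibiting explicit elements inside the gaps.
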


\smallskip

Finally, in the classical case $S=\{2,3\}$ we recover the Graham--Spencer--Witsenhausen constant
\[
  \alpha=\alpha_{\{2,3\}},
\]
obtain an explicit tail formula and prove two propositions for the structure of optimal sets.

\section{\Ss-smooth numbers and weighted simplices}

Fix a finite prime set
\[
  S=\{p_1,\dots,p_r\},\qquad \lambda_i\coloneq \log p_i\quad (1\le i\le r).
\]
For $u\ge 0$ define the \emph{weighted lattice simplex}
\[
  \SigmaS(u)\coloneq \left\{a=(a_1,\dots,a_r)\in \Zg^r: \sum_{i=1}^r a_i\lambda_i\le u\right\}.
\]
The map
$
  (a_1,\dots,a_r)\longmapsto p_1^{a_1}\cdots p_r^{a_r}
$
identifies $\SigmaS(u)$ with the set of $S$-smooth integers not exceeding $e^u$; hence
$
  \PsiS(e^u)=|\SigmaS(u)|.
$

\smallskip

Denote
\[
  \kS\coloneq \frac{1}{r!\prod_{i=1}^r \lambda_i}.
\]

\begin{lemma}\label{lem:lattice-count}
  As $u\to\infty$,
  \[
    |\SigmaS(u)|=\kS u^r+O_S(u^{r-1}).
  \]
  In particular,
  \[
    |\SigmaS(u)|\asymp_S u^r.
  \]
\end{lemma}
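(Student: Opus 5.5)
We count lattice points by comparing $\SigmaS(u)$ with the volume of the real simplex
\[
  T(u)\coloneq \Bigl\{x\in\R_{\ge0}^r:\ \sum_{i=1}^r x_i\lambda_i\le u\Bigr\},
\]
whose volume is $u^r/(r!\prod_i\lambda_i)=\kS u^r$. To each $a\in\SigmaS(u)$ we attach the unit cube $C_a\coloneq a+[0,1)^r$. These cubes are pairwise disjoint and each has volume $1$, so $|\SigmaS(u)|$ equals the volume of their union.

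Put $\Lambda\coloneq\sum_{i=1}^r\lambda_i$. Every point $x\in C_a$ satisfies $\sum x_i\lambda_i\le\sum a_i\lambda_i+\Lambda\le u+\Lambda$, so
\[
  \bigcup_{a\in\SigmaS(u)} C_a\subseteq T(u+\Lambda).
\]
Conversely, let $x\in T(u)$ and set $a\coloneq\lfloor x\rfloor$ coordinatewise. Then $a\in\Zg^r$ and $\sum a_i\lambda_i\le\sum x_i\lambda_i\le u$, so $a\in\SigmaS(u)$ and $x\in C_a$. Hence $T(u)\subseteq\bigcup_{a\in\SigmaS(u)}C_a$. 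Taking volumes gives the sandwich
\[
  \kS u^r\le|\SigmaS(u)|\le\kS(u+\Lambda)^r .
\]

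Expanding binomially, $(u+\Lambda)^r-u^r=\sum_{j\ge1}\binom rj\Lambda^j u^{r-j}=O_S(u^{r-1})$ for $u\ge1$. Since $\Lambda$ and $\kS$ depend only on $S$, this gives $|\SigmaS(u)|=\kS u^r+O_S(u^{r-1})$. The statement $|\SigmaS(u)|\asymp_S u^r$ follows for $u\ge1$, where it is what is needed later. Near $u=0$ one has $|\SigmaS(u)|\ge1$ while $u^r\to0$, so the comparison is only meaningful for large $u$.

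The only step needing care is the lower inclusion, that is, checking that flooring preserves both membership in $\Zg^r$ and the weighted constraint. Positivity of the $\lambda_i$ makes this immediate, so there is no real obstacle.
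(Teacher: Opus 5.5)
Your proof is correct, and it takes a different route from the paper's.

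\textbf{The paper's route.} The paper proceeds by induction on $r$. It slices $\SigmaS(u)$ by the value $j$ of $a_r$ and applies the $(r-1)$-prime asymptotic to each slice $\Sigma_{S^-}(u-j\lambda_r)$. It then treats $\sum_{0\le j\le u/\lambda_r}(u-j\lambda_r)^{r-1}$ as a Riemann sum for $\lambda_r^{-1}\int_0^u v^{r-1}\,dv$.

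\textbf{Your route.} You trap the union of the cubes $a+[0,1)^r$ between the real simplex $T(u)$ and $T(u+\Lambda)$. This gives the explicit inequality $\kS u^r\le|\SigmaS(u)|\le\kS(u+\Lambda)^r$ for every $u\ge 0$ and every $r\ge 1$ in one step, with no induction and no Riemann-sum estimate.

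\textbf{What your version buys.} You get uniformity in $u$ and explicit constants. In the paper's induction, the inductive error term is applied at $v=u-j\lambda_r$ arbitrarily close to $0$. There an asymptotic ``as $v\to\infty$'' with error $O_S(v^{r-2})$ does not literally hold: for $r\ge 3$ it fails at $v=0$, where the count is $1$ and the main term is $0$. One really needs $O_S(1+v^{r-2})$, which is a harmless but unstated adjustment. Your sandwich makes it unnecessary. It also gives the exact one-sided bound $|\SigmaS(u)|\ge\kS u^r$ and the $\asymp_S$ claim for all $u\ge 1$ at once.

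\textbf{What the paper's version buys.} Its slicing matches the decomposition into lines parallel to the $e_r$-axis and faces $\{a_i=0\}$ used later in \Cref{prop:lower-bound} and \Cref{cor:upper-bound}. However, your bound holds for any number of primes, so it supplies those lower-dimensional counts equally well.
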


\begin{proof}
  We proceed by induction on $r$. If $r=1$, then
  \[
    \SigmaS(u)=\{a_1\in\Zg:a_1\lambda_1\le u\},
  \]
  hence
  \[
    |\SigmaS(u)|=\left\lfloor\frac{u}{\lambda_1}\right\rfloor+1=\frac{u}{\lambda_1}+O(1).
  \]
  Assume the statement holds for $r-1\ge 1$ primes. Write
  \[
    S^-\coloneq \{p_1,\dots,p_{r-1}\}.
  \]
  Partitioning $\SigmaS(u)$ by the value of $a_r$ gives
  \begin{equation}\label{eq:partition-simplex}
    |\SigmaS(u)|=\sum_{j=0}^{\lfloor u/\lambda_r\rfloor} |\Sigma_{S^-}(u-j\lambda_r)|.
  \end{equation}
  By the inductive hypothesis,
  \[
    |\Sigma_{S^-}(v)|=\frac{v^{r-1}}{(r-1)!\prod_{i=1}^{r-1}\lambda_i}+O_S(v^{r-2}).
  \]
  Substituting into \eqref{eq:partition-simplex} yields
  \[
    |\SigmaS(u)|
    =\frac{1}{(r-1)!\prod_{i=1}^{r-1}\lambda_i}\sum_{j=0}^{\lfloor u/\lambda_r\rfloor}(u-j\lambda_r)^{r-1}
    +O_S\!\left(\sum_{j=0}^{\lfloor u/\lambda_r\rfloor}(u-j\lambda_r)^{r-2}\right).
  \]
  The error sum is $O_S(u^{r-1})$. For the main sum, set $M\coloneq \lfloor u/\lambda_r\rfloor$. Then
  \[
    \sum_{j=0}^{M}(u-j\lambda_r)^{r-1}
    =\frac1{\lambda_r}\int_0^u v^{r-1}\,dv+O_S(u^{r-1})
    =\frac{u^r}{r\lambda_r}+O_S(u^{r-1}),
  \]
  since the sum is a Riemann sum for the monotone function $v^{r-1}$. Therefore
  \[
    |\SigmaS(u)|
    =\frac{u^r}{r!\prod_{i=1}^r \lambda_i}+O_S(u^{r-1}),
  \]
  as required.
\end{proof}

\begin{defn}
  For $a\in\Zg^r$, the \emph{unit corner at $a$} is the set
  \[
    \{a,a+e_1,\dots,a+e_r\},
  \]
  where $e_1,\dots,e_r$ are the standard basis vectors of $\R^r$.
\end{defn}

Under the map
\[
  (a_1,\dots,a_r)\longmapsto p_1^{a_1}\cdots p_r^{a_r},
\]
the forbidden configuration
\[
  \{n,p_1 n,\dots,p_r n\}
\]
corresponds to the unit corner
\[
  \{a,a+e_1,\dots,a+e_r\}.
\]
Thus the extremal problem for $\FS(X)$ is equivalent to the problem of choosing the largest subset of $\SigmaS(\log X)$ containing no unit corner.

\section{The extremal problem on \Ss-smooth numbers}

For $u\ge 0$, denote
\[
  g_S(u)\coloneq \max\bigl\{|A|:A\subseteq \SigmaS(u),\ A\text{ contains no unit corner}\bigr\}.
\]
Note that we have
$
  \FS(e^u)=g_S(u).
$

\subsection{Lower bound}

Define the coloring
\[
  \chi:\Zg^r\to \Z/(r+1)\Z,
  \qquad
  \chi(a_1,\dots,a_r)\equiv a_1+2a_2+\cdots+ra_r\pmod{r+1}.
\]

\begin{lemma}\label{lem:corner-colors}
  Every unit corner contains precisely one point of each color in $\Z/(r+1)\Z$.
\end{lemma}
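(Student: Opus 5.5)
The plan is a direct computation of the colors of the $r+1$ points of a unit corner. Fix $a\in\Zg^r$ and consider the corner $\{a,a+e_1,\dots,a+e_r\}$. Since $\chi$ is induced by the linear form $a\mapsto a_1+2a_2+\cdots+ra_r$ reduced modulo $r+1$, it is additive. Hence
\[
  \chi(a+e_i)\equiv \chi(a)+\chi(e_i)\equiv \chi(a)+i\pmod{r+1},\qquad 1\le i\le r.
\]
Together with $\chi(a)$ itself, which we may write as $\chi(a)+0$, the colors of the corner are therefore $\chi(a)+j$ for $j=0,1,\dots,r$.

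The key step is the observation that $\{0,1,\dots,r\}$ is a complete residue system modulo $r+1$. Translation by the fixed residue $\chi(a)$ is a bijection of $\Z/(r+1)\Z$, so the $r+1$ values $\chi(a)+j$ are pairwise distinct and exhaust $\Z/(r+1)\Z$. The corner has exactly $r+1$ points, because the vectors $a,a+e_1,\dots,a+e_r$ are distinct. So the map from the corner to its colors is a bijection onto $\Z/(r+1)\Z$, and each color occurs precisely once.

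I expect no real obstacle here. The only point needing care is to note explicitly that the weights $0,1,\dots,r$ attached to the corner points $a,a+e_1,\dots,a+e_r$ are distinct modulo $r+1$. This is exactly why the modulus $r+1$ and the coefficients $1,2,\dots,r$ were chosen in the definition of $\chi$.
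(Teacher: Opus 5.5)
Your proof is correct and follows the paper's argument: both compute $\chi(a+e_i)\equiv\chi(a)+i\pmod{r+1}$ and observe that the colors $\chi(a),\chi(a)+1,\dots,\chi(a)+r$ form a complete residue system modulo $r+1$. Your explicit remark that the $r+1$ points of the corner are distinct is a small extra step the paper leaves implicit, and it costs nothing.
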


\begin{proof}
  If $\chi(a)=c$, then
  \[
    \chi(a+e_i)\equiv c+i\pmod{r+1}\qquad (1\le i\le r).
  \]
  Therefore, the colors on the unit corner at $a$ are
  \[
    c,c+1,\dots,c+r\pmod{r+1},
  \]
  which are the residue classes modulo $r+1$.
\end{proof}

\begin{proposition}\label{prop:lower-bound}
  As $u\to\infty$,
  \[
    g_S(u)\ge \frac{r}{r+1}|\SigmaS(u)|+O_S(u^{r-1}).
  \]
\end{proposition}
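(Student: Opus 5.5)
The plan is to take the largest color class out of the picture. For each residue $c\in\Z/(r+1)\Z$ we set
\[
  A_c\coloneq \{a\in\SigmaS(u):\chi(a)\not\equiv c\pmod{r+1}\}.
\]
By \Cref{lem:corner-colors}, every unit corner contained in $\SigmaS(u)$ has a point of color $c$. That point is missing from $A_c$, so $A_c$ contains no unit corner, and therefore $g_S(u)\ge |A_c|$ for every $c$.

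First we pick $c$ so that the removed class is as small as possible. Write $C_c\coloneq\{a\in\SigmaS(u):\chi(a)\equiv c\}$. The classes $C_0,\dots,C_r$ partition $\SigmaS(u)$, so $\sum_c |C_c|=|\SigmaS(u)|$. Some $c$ therefore satisfies $|C_c|\le |\SigmaS(u)|/(r+1)$, and for that $c$,
\[
  g_S(u)\ge |A_c|=|\SigmaS(u)|-|C_c|\ge \frac{r}{r+1}|\SigmaS(u)|.
\]
This is already the claimed inequality, with error term $0$. It requires no equidistribution estimate, because averaging over $c$ is enough.

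This averaging argument has essentially no obstacle. If one also wants to know that every class has size $\frac{1}{r+1}|\SigmaS(u)|+O_S(u^{r-1})$, so that any fixed $c$ works, the plan is to fiber $\SigmaS(u)$ by $(a_2,\dots,a_r)$. Each fiber is an interval of consecutive $a_1$-values, and $\chi$ increases by $1$ along it. Hence each fiber meets every class equally up to an error of $1$. The number of fibers is $|\Sigma_{S'}(u)|$ with $S'=S\setminus\{p_1\}$, which is $O_S(u^{r-1})$ by \Cref{lem:lattice-count} applied to $r-1$ primes (for $r-1\ge1$). Combining these gives the $O_S(u^{r-1})$ error term directly. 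Neither route needs anything beyond \Cref{lem:corner-colors} and \Cref{lem:lattice-count}.
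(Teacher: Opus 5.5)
Your proposal is correct and matches the paper's proof essentially verbatim. You remove the smallest color class and average, which already gives the bound with zero error; your optional equidistribution step is the paper's fiber count, except that you fiber along $e_1$ instead of $e_r$, which makes no difference. One small slip: your opening sentence says ``largest'' color class, but your argument correctly removes the smallest one.
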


\begin{proof}
  For each residue class $j\in \Z/(r+1)\Z$, denote
  \[
    C_j(u)\coloneq \{a\in\SigmaS(u):\chi(a)=j\}.
  \]
  By \Cref{lem:corner-colors}, removing any one color class destroys every unit corner. Hence
  \[
    g_S(u)\ge |\SigmaS(u)|-\min_j |C_j(u)|.
  \]
  Since the $r+1$ color classes partition $\SigmaS(u)$,
  \[
    \min_j |C_j(u)|\le \frac{1}{r+1}|\SigmaS(u)|,
  \]
  which already gives
  \[
    g_S(u)\ge \frac{r}{r+1}|\SigmaS(u)|.
  \]
  To obtain the sharper error term, partition $\SigmaS(u)$ into lines parallel to the $e_r$-axis. On each such line, the values of $\chi$ form an arithmetic progression modulo $r+1$, so the counts of the colors differ by at most $1$. The number of such lines is the number of lattice points in an $(r-1)$-dimensional weighted simplex, hence $O_S(u^{r-1})$ by \Cref{lem:lattice-count}. It follows that
  \[
    |C_j(u)|=\frac{1}{r+1}|\SigmaS(u)|+O_S(u^{r-1})
  \]
  for every $j$, and therefore
  \[
    g_S(u)\ge \frac{r}{r+1}|\SigmaS(u)|+O_S(u^{r-1}).\qedhere
  \]
\end{proof}

\subsection{Upper bound}

For a set $A\subseteq\SigmaS(u)$ and an index $1\le i\le r$, define the backward shift of $A$ in direction $e_i$ by
\[
  A^{(i)}\coloneq \{x\in \Zg^r:x+e_i\in A\}.
\]
Since $\SigmaS(u)$ is downward closed with respect to the coordinatewise order, we have $A^{(i)}\subseteq \SigmaS(u)$.

\begin{proposition}\label{prop:exact-shift}
  Let $A\subseteq \SigmaS(u)$ contain no unit corner. For an index $1\le i\le r$, set
  $
    Z_i(A)\coloneq |\{a\in A:a_i=0\}|.
  $
  For $x\in\SigmaS(u)$ define
  \[
    m_A(x)\coloneq \1_A(x)+\sum_{i=1}^r \1_{A^{(i)}}(x)
  \]
  and write
  \[
    D(A)\coloneq \sum_{x\in\SigmaS(u)} \bigl(r-m_A(x)\bigr).
  \]
  Then $D(A)\ge 0$ and
  \begin{equation}\label{eq:exact-shift}
    (r+1)|A|=r|\SigmaS(u)|+\sum_{i=1}^r Z_i(A)-D(A).
  \end{equation}
  Equivalently,
  \[
    |A|=\frac{r}{r+1}|\SigmaS(u)|+\frac{1}{r+1}\sum_{i=1}^r Z_i(A)-\frac{1}{r+1}D(A).
  \]
\end{proposition}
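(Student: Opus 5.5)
Both assertions reduce to counting: sum $m_A(x)$ over $\SigmaS(u)$, and check that $m_A(x)\le r$ pointwise. I will do the counting identity first and the nonnegativity of $D(A)$ second.

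For the identity, expand the definition of $D(A)$:
\[
  D(A)=r|\SigmaS(u)|-\sum_{x\in\SigmaS(u)}\1_A(x)-\sum_{i=1}^r\sum_{x\in\SigmaS(u)}\1_{A^{(i)}}(x).
\]
The first inner sum is $|A|$, since $A\subseteq\SigmaS(u)$. For the shifted sums, $A^{(i)}\subseteq\SigmaS(u)$ because $\SigmaS(u)$ is downward closed, as noted before the statement. Hence $\sum_{x}\1_{A^{(i)}}(x)=|A^{(i)}|$.

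The map $x\mapsto x+e_i$ is a bijection from $A^{(i)}$ onto $\{a\in A:a_i\ge 1\}$. Its inverse $a\mapsto a-e_i$ stays in $\Zg^r$ exactly when $a_i\ge1$. Therefore $|A^{(i)}|=|A|-Z_i(A)$.

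Substituting gives
\[
  D(A)=r|\SigmaS(u)|-|A|-\sum_{i=1}^r\bigl(|A|-Z_i(A)\bigr)=r|\SigmaS(u)|-(r+1)|A|+\sum_{i=1}^r Z_i(A).
\]
Rearranging yields \eqref{eq:exact-shift}, and dividing by $r+1$ gives the equivalent form. This step is routine bookkeeping.

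For $D(A)\ge0$, it suffices to show $m_A(x)\le r$ for every $x\in\SigmaS(u)$. By definition, $m_A(x)$ counts how many of the $r+1$ points $x,x+e_1,\dots,x+e_r$ lie in $A$, because $\1_{A^{(i)}}(x)=\1_A(x+e_i)$. So $m_A(x)\le r+1$, with equality exactly when the whole unit corner at $x$ lies in $A$. Since $A$ contains no unit corner, $m_A(x)\le r$, so each summand $r-m_A(x)$ is nonnegative, and so is $D(A)$.

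The only point that needs care is the boundary. The points $x+e_i$ may leave $\SigmaS(u)$, but such points are simply not in $A$, so the count above is unaffected. Likewise the sum defining $D(A)$ runs only over $x\in\SigmaS(u)$, which contains every $A^{(i)}$, so no contributions are lost.
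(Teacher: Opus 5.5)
Your proposal is correct and follows the paper's proof essentially verbatim. Like the paper, you get $D(A)\ge 0$ from $m_A(x)\le r$, which holds because $A\cap A^{(1)}\cap\cdots\cap A^{(r)}=\emptyset$. You then obtain the identity by summing $m_A$ over $\SigmaS(u)$ and using $|A^{(i)}|=|A|-Z_i(A)$, which comes from the bijection $x\mapsto x+e_i$ onto $\{a\in A:a_i\ge 1\}$.
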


\begin{proof}
  Since $A$ contains no unit corner, the intersection
  \[
    A\cap A^{(1)}\cap\cdots\cap A^{(r)}
  \]
  is empty. Therefore $m_A(x)\le r$ for any $x\in\SigmaS(u)$, so $D(A)\ge 0$.

  Summing $m_A(x)$ over $\SigmaS(u)$ gives
  \[
    \sum_{x\in\SigmaS(u)} m_A(x)=|A|+\sum_{i=1}^r |A^{(i)}|.
  \]
  By the definition of $D(A)$,
  \[
    D(A)=r|\SigmaS(u)|-\left(|A|+\sum_{i=1}^r |A^{(i)}|\right).
  \]
  Furthermore, the map $x\mapsto x+e_i$ is a bijection from $A^{(i)}$ onto $\{a\in A:a_i\ge 1\}$; hence
  \[
    |A^{(i)}|=|A|-Z_i(A)\qquad (1\le i\le r).
  \]
  Substituting this into the previous identity yields
  \[
    D(A)=r|\SigmaS(u)|-(r+1)|A|+\sum_{i=1}^r Z_i(A),
  \]
  which is equivalent to \eqref{eq:exact-shift} and completes the proof.
\end{proof}

\begin{corollary}\label{cor:upper-bound}
  As $u\to\infty$,
  \[
    g_S(u)\le \frac{r}{r+1}|\SigmaS(u)|+O_S(u^{r-1}).
  \]
  Moreover,
  \[
    g_S(u)\le \frac{r}{r+1}|\SigmaS(u)|+\frac{1}{r+1}\sum_{i=1}^r F_i(u),
  \]
  where
  $
    F_i(u)\coloneq |\{a\in\SigmaS(u):a_i=0\}|.
  $
\end{corollary}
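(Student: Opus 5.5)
The plan is to apply \Cref{prop:exact-shift} to an extremal set and then estimate the boundary term. Let $A\subseteq\SigmaS(u)$ be a corner-free set with $|A|=g_S(u)$. By \eqref{eq:exact-shift} and the inequality $D(A)\ge 0$,
\[
  (r+1)g_S(u)=(r+1)|A|\le r|\SigmaS(u)|+\sum_{i=1}^r Z_i(A).
\]
Every point counted by $Z_i(A)$ lies in $\{a\in\SigmaS(u):a_i=0\}$, so $Z_i(A)\le F_i(u)$. Dividing by $r+1$ then gives the second, explicit inequality of the corollary.

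It remains to show $F_i(u)=O_S(u^{r-1})$ for each $i$. Deleting the $i$-th coordinate maps the face $\{a\in\SigmaS(u):a_i=0\}$ bijectively onto $\Sigma_{S\setminus\{p_i\}}(u)$. This is the weighted simplex for the $r-1$ primes of $S\setminus\{p_i\}$, with the same weights $\lambda_j$ for $j\ne i$. Since $r\ge 2$, this prime set is nonempty, and \Cref{lem:lattice-count} (valid for any number $\ge 1$ of primes, as its inductive proof shows) gives $|\Sigma_{S\setminus\{p_i\}}(u)|=O_S(u^{r-1})$. Summing over the $r$ values of $i$ yields
\[
  \frac1{r+1}\sum_{i=1}^r F_i(u)=O_S(u^{r-1}),
\]
which is the first inequality.

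No step is a real obstacle, since the identity in \Cref{prop:exact-shift} does all the work. The only points needing care are these: extremal sets exist because $\SigmaS(u)$ is finite, and the implied constants depend on $S$ only through the $\lambda_j$, which are covered by \Cref{lem:lattice-count} applied to each of the $r$ subsets of size $r-1$.
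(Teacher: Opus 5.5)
Your proposal is correct and follows the paper's proof essentially verbatim. Both apply the identity of \Cref{prop:exact-shift}, discard $D(A)\ge 0$, bound $Z_i(A)\le F_i(u)$, and estimate each face $F_i(u)$ as an $(r-1)$-dimensional weighted simplex via \Cref{lem:lattice-count}. The only cosmetic difference is that you fix an extremal $A$ at the start, whereas the paper bounds an arbitrary corner-free $A$ and takes the maximum at the end.
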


\begin{proof}
  Let $A\subseteq\SigmaS(u)$ contain no unit corner. By \Cref{prop:exact-shift},
  \[
    |A|\le \frac{r}{r+1}|\SigmaS(u)|+\frac{1}{r+1}\sum_{i=1}^r Z_i(A).
  \]
  Since $Z_i(A)\le F_i(u)$, it remains to estimate $F_i(u)$. Fixing $a_i=0$ identifies the corresponding face with an $(r-1)$-dimensional weighted simplex, so \Cref{lem:lattice-count} gives
  \[
    F_i(u)=O_S(u^{r-1}).
  \]
  Taking the maximum over $A$ proves the result.
\end{proof}

\smallskip

\begin{proof}[Proof of \Cref{thm:intro-smooth}]
  Combining \Cref{prop:lower-bound,cor:upper-bound} yields
  \[
    g_S(u)=\frac{r}{r+1}|\SigmaS(u)|+O_S(u^{r-1}).
  \]
  Since \Cref{lem:lattice-count} gives
  \[
    |\SigmaS(u)|=\frac{u^r}{r!\prod_{i=1}^r \log p_i}+O_S(u^{r-1})
  \]
  and we have $\FS(e^u)=g_S(u)$, we obtain
  \[
    \FS(e^u)=\frac{r}{r+1}\cdot\frac{u^r}{r!\prod_{i=1}^r \log p_i}+O_S(u^{r-1}).
  \]
  Fixing $u=\log X$ yields the first two asymptotic formulas.

  Now let $\ds_1<\ds_2<\cdots$ be the increasing sequence of positive $S$-smooth numbers, and let $\ds_k\le X<\ds_{k+1}$. Then
  \[
    \PsiS(X)=k,\qquad \FS(X)=\fS(k).
  \]
  Moreover,
  \[
    k=\PsiS(X)=\kS (\log X)^r+O_S\bigl((\log X)^{r-1}\bigr),
  \]
  so $\log X\asymp_S k^{1/r}$. Substituting into the $X$-asymptotic gives
  \[
    \fS(k)=\frac{r}{r+1}k+O_S\bigl((\log X)^{r-1}\bigr)=\frac{r}{r+1}k+O_S\bigl(k^{(r-1)/r}\bigr).\qedhere
  \]
\end{proof}

\section{The extremal problem on the full interval}
We now pass from the extremal problem inside $M(S)$ to the corresponding problem on $[1,N]$. The resulting density formula is consistent with the classical results for forbidden multiplicative configurations; we include a direct derivation in the present notation for completeness and because it leads naturally to effective tail bounds and computable intervals for $\alpha_S$.

For a positive integer $n$, we write
\[
  n=md,
  \qquad d\in M(S),
  \qquad (m,P_S)=1.
\]
This decomposition is unique.

\begin{proposition}\label{prop:component-decomposition}
  For every $N\ge 1$,
  \[
    \GS(N)=\sum_{\substack{m\le N\\ (m,P_S)=1}} \fS\bigl(\PsiS(N/m)\bigr).
  \]
\end{proposition}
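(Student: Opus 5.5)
The plan is to decompose $[1,N]$ into disjoint ``components'' on which the forbidden configurations live, and then to optimize independently on each component. For each $m\le N$ with $(m,P_S)=1$, set
\[
  B_m\coloneq \{md: d\in M(S),\ md\le N\}=m\cdot\bigl(M(S)\cap[1,N/m]\bigr).
\]
By the uniqueness of the decomposition $n=md$, the sets $B_m$ partition $[1,N]$. Each $m$ with $(m,P_S)=1$ and $m\le N$ indexes a nonempty block, since $m\in B_m$. Conversely, $B_m=\emptyset$ when $m>N$, so the sum in the statement ranges exactly over the nonempty blocks.

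The key structural point is that every forbidden configuration lies inside a single block. Take $\{n,p_1n,\dots,p_rn\}$ with $n=md$. Then $p_in=m(p_id)$, and $p_id\in M(S)$. By uniqueness, $p_in$ has the same $S$-coprime part $m$, so the whole configuration lies in $B_m$. Hence a set $A\subseteq[1,N]$ is configuration-free if and only if each $A\cap B_m$ is configuration-free. It follows that
\[
  \GS(N)=\sum_m \max\{|A_m|: A_m\subseteq B_m \text{ configuration-free}\},
\]
where the upper bound comes from restricting an optimal $A$ to the blocks, and the lower bound from taking the union of optimal pieces.

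It remains to identify each block maximum with $\fS(\PsiS(N/m))$. The map $d\mapsto md$ is a bijection from $M(S)\cap[1,N/m]$ onto $B_m$. This map sends configurations $\{d,p_1d,\dots,p_rd\}$ to configurations $\{md,p_1md,\dots\}$. Conversely, any configuration $\{n,p_1n,\dots\}$ inside $B_m$ has $n=md$ with $d$ $S$-smooth, so it pulls back to a configuration. Therefore the block maximum equals $\FS(N/m)$. Finally, $M(S)\cap[1,N/m]=\{\ds_1,\dots,\ds_k\}$ with $k=\PsiS(N/m)$, so $\FS(N/m)=\fS(\PsiS(N/m))$, as already noted in the proof of \Cref{thm:intro-smooth}.

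No step is a genuine obstacle. The only point requiring care is the claim that configurations do not straddle blocks, in both directions. In particular, one must check that the pullback of a configuration in $B_m$ has its base point $d$ in $M(S)$. This holds precisely because the base point $n$ of the configuration lies in $B_m$.
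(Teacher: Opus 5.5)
Your proposal is correct and follows the same route as the paper: partition $[1,N]$ into the components $\{md: d\in M(S),\ md\le N\}$ indexed by $m\le N$ with $(m,P_S)=1$, observe that every configuration $\{n,p_1n,\dots,p_rn\}$ lies in a single component because multiplying by $p_i$ does not change the $S$-free part, and identify each component with $M(S)\cap[1,N/m]$ to get the contribution $\fS(\PsiS(N/m))$. Your version is slightly more explicit than the paper's in two places: you check that configurations pull back under $d\mapsto md$ in both directions, and you state that the block maximum is $\FS(N/m)=\fS(\PsiS(N/m))$.
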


\begin{proof}
  For each $m\le N$ with $(m,P_S)=1$, define
  $
    \mathcal C_m(N)\coloneq \{md:d\in M(S),\ md\le N\}.
  $
  The sets $\mathcal C_m(N)$ are pairwise disjoint and their union is $[1,N]$. Moreover, multiplying by a prime from $S$ does not change the $S$-free part $m$, so every configuration
  \[
    \{n,p_1n,\dots,p_r n\}
  \]
  is contained in a single component $\mathcal C_m(N)$. Thus, the extremal problem splits over the components as follows:
  \[
    \GS(N)=\sum_{\substack{m\le N\\ (m,P_S)=1}} \max\bigl\{|A|:A\subseteq \mathcal C_m(N),\ A\text{ contains no }\{n,p_1n,\dots,p_r n\}\bigr\}.
  \]
  $\mathcal C_m(N)$ is bijectively identified with the set of $S$-smooth numbers not exceeding $N/m$, and has size $\PsiS(N/m)$; hence, its contribution is $\fS(\PsiS(N/m))$. Summing finishes the proof.
\end{proof}

\begin{proof}[Derivation of \Cref{cor:intro-full-density}]
  By \Cref{prop:component-decomposition},
  \[
    \GS(N)=\sum_{\substack{m\le N\\ (m,P_S)=1}} \fS\bigl(\PsiS(N/m)\bigr).
  \]
  Since
  \[
    \fS(t)=\sum_{k=1}^t \Deltaf_S(k),
  \]
  we obtain
  \[
    \fS\bigl(\PsiS(N/m)\bigr)=\sum_{k\ge 1} \Deltaf_S(k)\,\1_{\{\ds_k\le N/m\}}.
  \]
  Therefore
  \[
    \GS(N)=\sum_{k\ge 1} \Deltaf_S(k)\,C_S\!\left(\frac{N}{\ds_k}\right),
  \]
  where
  $
    C_S(x)\coloneq |\{m\le x:(m,P_S)=1\}|.
  $
  By inclusion--exclusion,
  \[
    C_S(x)=\cS x+O_S(1).
  \]
  Substituting this gives
  \[
    \GS(N)=\cS N\sum_{\ds_k\le N}\frac{\Deltaf_S(k)}{\ds_k}+O_S\bigl(\PsiS(N)\bigr).
  \]
  Since
  \[
    \sum_{k\ge 1}\frac1{\ds_k}=\sum_{d\in M(S)} \frac1d=\prod_{p\in S}\left(1-\frac1p\right)^{-1}<\infty,
  \]
  the series
  \[
    \sum_{k\ge 1}\frac{\Deltaf_S(k)}{\ds_k}
  \]
  converges absolutely. Define
  \[
    \alphaS\coloneq \cS\sum_{k\ge 1}\frac{\Deltaf_S(k)}{\ds_k}.
  \]
  Then
  \[
    \GS(N)-\alphaS N
    =O_S\bigl(\PsiS(N)\bigr)-\cS N\sum_{\ds_k>N}\frac{\Deltaf_S(k)}{\ds_k}.
  \]
  By \Cref{lem:lattice-count},
  \[
    \PsiS(N)=O_S\bigl((\log N)^r\bigr).
  \]
  Additionally,
  \[
    0\le N\sum_{\ds_k>N}\frac{\Deltaf_S(k)}{\ds_k}
    \le N\sum_{\ds_k>N}\frac1{\ds_k}=O_S\bigl((\log N)^{r-1}\bigr)
  \]
  by \Cref{prop:tail-asymptotic} below. Hence
  \[
    \GS(N)=\alphaS N+O_S\bigl((\log N)^r\bigr).\qedhere
  \]
\end{proof}

\smallskip

\begin{proof}[Proof of \Cref{cor:intro-bounds}]
  Since $0\le \Deltaf_S(k)\le 1$ for every $k$,
  \[
    0\le \alphaS-L_{S,N}=\cS\sum_{k>N}\frac{\Deltaf_S(k)}{\ds_k}
    \le \cS\sum_{k>N}\frac1{\ds_k}=U_{S,N}-L_{S,N}.
  \]
  This proves
  \[
    L_{S,N}\le \alphaS\le U_{S,N}.
  \]
  Furthermore,
  \[
    L_{S,N+1}-L_{S,N}=\frac{\cS\,\Deltaf_S(N+1)}{\ds_{N+1}}\ge 0,
  \]
  and
  \[
    U_{S,N}-U_{S,N+1}=\frac{\cS\,(1-\Deltaf_S(N+1))}{\ds_{N+1}}\ge 0.
  \]
  Thus, the intervals are nested. The formula for the width is immediate from the definition.
\end{proof}

For $X\ge 1$, define the reciprocal tail
\[
  T_S(X)\coloneq \sum_{\substack{d\in M(S)\\ d>X}} \frac1d.
\]
Then
\[
  U_{S,N}-L_{S,N}=\cS T_S(\ds_N).
\]

\smallskip

The following proposition gives a method of computation of the reciprocal tail. As the quantities $L_{S,N}$ and $U_{S,N}$ are computable for each fixed $N$, choosing $N$ so that $U_{S,N}-L_{S,N}<10^{-M}$ determines the first $M$ decimal digits of $\alphaS$.

\begin{proposition}\label{prop:tail-recursion}
  Let $S=\{p_1,\dots,p_r\}$ be a finite set of distinct primes, and denote $S'\coloneq S\setminus\{p_1\}$. If
  $
    A\coloneq \lfloor \log_{p_1} X\rfloor,
  $
  then
  \[
    T_S(X)=\sum_{a=0}^{A}\frac1{p_1^a}T_{S'}\!\left(\frac{X}{p_1^a}\right)
    +\frac{p_1^{-A}}{p_1-1}\prod_{q\in S'}\left(1-\frac1q\right)^{-1}.
  \]
  In particular, for $S=\{p\}$,
  \[
    T_{\{p\}}(X)=\frac{p^{-\lfloor \log_p X\rfloor}}{p-1}.
  \]
\end{proposition}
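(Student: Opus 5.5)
The plan is to split $M(S)$ according to the exponent of $p_1$ and sum the resulting geometric pieces. Since $S$ consists of distinct primes, every $d\in M(S)$ can be written uniquely as $d=p_1^a d'$ with $a\in\Zg$ and $d'\in M(S')$. All terms are positive and $\sum_{d\in M(S)}1/d<\infty$, so the sum defining $T_S(X)$ may be rearranged freely:
\[
  T_S(X)=\sum_{a\ge 0}\frac1{p_1^a}\sum_{\substack{d'\in M(S')\\ d'>X/p_1^a}}\frac1{d'}.
\]

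The next step is to cut this sum at $a=A=\lfloor\log_{p_1}X\rfloor$. Note that $A\ge 0$ because $X\ge 1$.
\begin{itemize}
  \item For $0\le a\le A$ we have $X/p_1^a\ge 1$. The inner sum is then by definition $T_{S'}(X/p_1^a)$, and these terms give the finite sum in the statement.
  \item For $a\ge A+1$ we have $p_1^a>X$, so $X/p_1^a<1\le d'$ for every $d'\in M(S')$. The inner sum is therefore the full reciprocal sum over $M(S')$, which is the Euler product $\prod_{q\in S'}(1-1/q)^{-1}$, as already used in the derivation of \Cref{cor:intro-full-density}.
\end{itemize}
The remaining geometric series is
\[
  \sum_{a\ge A+1}p_1^{-a}=\frac{p_1^{-(A+1)}}{1-1/p_1}=\frac{p_1^{-A}}{p_1-1},
\]
and this gives the second term of the formula.

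For the special case $S=\{p\}$, take $S'=\emptyset$, so that $M(\emptyset)=\{1\}$ and the empty product equals $1$. Then $T_\emptyset(Y)=0$ whenever $Y\ge 1$. In every term of the finite sum the argument satisfies $X/p^a\ge 1$, so all these terms vanish and only $p^{-\lfloor\log_p X\rfloor}/(p-1)$ remains. One can also check this directly: the tail is $\sum_{a>A}p^{-a}$.

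The only real point of care is the boundary convention at $a=A$ and the requirement $X\ge 1$. These are needed so that each inner sum with $a\le A$ is exactly a tail $T_{S'}(Y)$ with $Y\ge 1$, which is the range on which $T_{S'}$ is defined. Everything else is routine bookkeeping.
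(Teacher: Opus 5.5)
Your proposal is correct and follows the paper's proof essentially step for step: you use the unique factorization $d=p_1^a e$ with $e\in M(S')$, split at $a=A$ into the tails $T_{S'}(X/p_1^a)$, and for $a>A$ take the full Euler product times the geometric series $\sum_{a>A}p_1^{-a}=p_1^{-A}/(p_1-1)$. Your handling of the one-prime case through $S'=\emptyset$ (with $T_\emptyset(Y)=0$ for $Y\ge 1$) is just a more formal version of the paper's direct remark that $T_{\{p\}}(X)=\sum_{a>\lfloor\log_p X\rfloor}p^{-a}$.
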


\begin{proof}
  Every element of $M(S)$ can be written uniquely in the form $p_1^a e$, where $a\ge 0$ and $e\in M(S')$. Hence
  \[
    T_S(X)=\sum_{\substack{a\ge 0,\ e\in M(S')\\ p_1^a e>X}} \frac{1}{p_1^a e}.
  \]
  If $0\le a\le A$, then $p_1^a\le X$, so the condition $p_1^a e>X$ is equivalent to
  \[
    e>\frac{X}{p_1^a},
  \]
  and the contribution of these terms is
  \[
    \sum_{a=0}^{A} \frac1{p_1^a}T_{S'}\!\left(\frac{X}{p_1^a}\right).
  \]
  If $a\ge A+1$, then $p_1^a>X$, so every $e\in M(S')$ is admissible. The remaining contribution is
  \[
    \begin{aligned}
      \sum_{a=A+1}^{\infty} \frac1{p_1^a}\sum_{e\in M(S')}\frac1e
       & =\left(\sum_{a=A+1}^{\infty} \frac1{p_1^a}\right)
      \prod_{q\in S'}\left(1-\frac1q\right)^{-1}           \\
       & =\frac{p_1^{-A}}{p_1-1}
      \prod_{q\in S'}\left(1-\frac1q\right)^{-1}.
    \end{aligned}
  \]
  This proves the recursion. The one-prime formula is the special case
  \[
    T_{\{p\}}(X)=\sum_{a>\lfloor \log_p X\rfloor} \frac1{p^a}.\qedhere
  \]
\end{proof}

\begin{proposition}\label{prop:tail-asymptotic}
  As $X\to\infty$,
  \[
    T_S(X)=\frac{(\log X)^{r-1}}{(r-1)!\,X\prod_{p\in S} \log p}+O_S\!\left(\frac{(\log X)^{r-2}}{X}\right).
  \]
\end{proposition}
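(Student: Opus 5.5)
We will express the tail as a Stieltjes integral against the counting function and then feed in \Cref{lem:lattice-count}. Writing $\PsiS(t)=|\SigmaS(\log t)|$, we have
\[
  T_S(X)=\int_{X^+}^{\infty}\frac{d\PsiS(t)}{t}
  =-\frac{\PsiS(X)}{X}+\int_X^\infty\frac{\PsiS(t)}{t^2}\,dt ,
\]
by partial summation. The boundary term at infinity vanishes because $\PsiS(t)=O_S((\log t)^r)$. The right-continuity convention gives exactly the terms with $d>X$.

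Next we substitute $\PsiS(t)=\kS(\log t)^r+E(t)$, where $E(t)=O_S((\log t)^{r-1})$ for $t\ge 2$ by \Cref{lem:lattice-count}. The main part becomes
\[
  \kS\left(-\frac{(\log X)^r}{X}+\int_X^\infty\frac{(\log t)^r}{t^2}\,dt\right).
\]
Integrating by parts once,
\[
  \int_X^\infty\frac{(\log t)^r}{t^2}\,dt=\frac{(\log X)^r}{X}+r\int_X^\infty\frac{(\log t)^{r-1}}{t^2}\,dt .
\]
So the $(\log X)^r/X$ terms cancel and we are left with
\[
  r\kS\int_X^\infty\frac{(\log t)^{r-1}}{t^2}\,dt=r\kS\frac{(\log X)^{r-1}}{X}+O_S\!\left(\frac{(\log X)^{r-2}}{X}\right).
\]
The error estimate here comes from repeating the integration by parts, or from the substitution $t=Xs$. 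Since $r\kS=1/((r-1)!\prod\log p)$, this is exactly the claimed main term.

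The main obstacle is the error contribution
\[
  -\frac{E(X)}{X}+\int_X^\infty\frac{E(t)}{t^2}\,dt .
\]
A crude bound only gives $O((\log X)^{r-1}/X)$, which is the same order as the main term, so the lemma's error term is not sufficient as stated. We need cancellation: the combination $-E(X)/X+\int_X^\infty E(t)t^{-2}\,dt$ must be shown to be smaller.

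I would avoid this by inducting on $r$ through \Cref{prop:tail-recursion} instead. For $r=1$, $T_{\{p\}}(X)\asymp 1/X$, which is $(\log X)^0/(X\log p)$ up to $O(1/X)$. That is consistent in size, though the constant oscillates; this is fine for $r\ge2$, where it lands in the error term.

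For $r\ge 2$ we use
\[
  T_S(X)=\sum_{a\le A}p_1^{-a}\,T_{S'}(X/p_1^a)+O_S(1/X).
\]
The inductive hypothesis gives
\[
  p_1^{-a}\,T_{S'}(X/p_1^a)=\frac{(\log X-a\lambda_1)^{r-2}}{(r-2)!\,X\prod_{S'}\log p}+O\!\left(\frac{(\log X-a\lambda_1)^{r-3}}{X}\right).
\]
For $r=2$ we instead use $T_{\{q\}}(Y)\le q/((q-1)Y)$, bounded by $O(1/Y)$, so every term is $O(1/X)$. The resulting sum over $a$ is then a Riemann sum, as in the proof of \Cref{lem:lattice-count}:
\[
  \sum_{a\le A}(\log X-a\lambda_1)^{r-2}=\frac{(\log X)^{r-1}}{(r-1)\lambda_1}+O_S\bigl((\log X)^{r-2}\bigr).
\]
This gives the main term with the factor $1/((r-1)!\prod_S\log p)$.

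The delicate point is $r=2$. There the summands $p_1^{-a}T_{\{p_2\}}(X/p_1^a)$ equal $\frac{1}{X}\cdot\frac{p_2^{\theta_a}}{p_2-1}$ with $\theta_a=\{\log_{p_2}(X/p_1^a)\}$, and these are bounded but oscillating. The claimed error is $O(1/X)$, while summing $A\sim\log X$ terms needs the average of $p_2^{\theta_a}/(p_2-1)$ to equal $1/\log p_2$. Since $\log p_1/\log p_2$ is irrational, equidistribution of $\theta_a$ gives the main term, because $\int_0^1 p^\theta\,d\theta/(p-1)=1/\log p$. However, achieving an $O(1)$ total discrepancy needs a discrepancy bound of the shape $O(1)$. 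That is false in general, so I expect the honest output is $O_S(o(\log X)/X)$ unless a sharper argument is found. This is where I would concentrate effort, for example via a direct lattice-point count near the line $a\lambda_1+b\lambda_2=\log X$.
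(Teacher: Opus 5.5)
Your proposal does not prove the statement. Both of your routes stop at the same point, and you say so yourself. The obstruction you found is real, however, and it is not a weakness of your method.

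The paper's proof is exactly your first route. It inserts \Cref{lem:lattice-count} into the partial-summation formula and records the contribution of the remainder $E(t)$ as $O_S((\log X)^{r-1}/X)$. It then cancels the $(\log X)^r/X$ terms and declares the asymptotic proved. As you point out, that remainder has the same order as the main term $r\kS(\log X)^{r-1}/X$. So the paper's argument only yields $T_S(X)\ll_S(\log X)^{r-1}/X$, which is all the derivation of \Cref{cor:intro-full-density} actually uses.

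No sharper argument can exist for $r=2$, because there the statement is false, and your computation is one step from showing it. Put $u=\log X$, $\lambda_i=\log p_i$, $A=\lfloor u/\lambda_1\rfloor$, $\beta=\lambda_1/\lambda_2\notin\Q$, $y=\{u/\lambda_1\}\beta$, and $g(\theta)=p_2^{\theta}/(p_2-1)$ on $[0,1)$. Reindexing your sum by $n=A-a$ gives exactly
\[
  X\,T_S(X)-\frac{u}{\lambda_1\lambda_2}=\sum_{n=0}^{A}\Bigl(g(\{y+n\beta\})-\frac{1}{\lambda_2}\Bigr)+O(1).
\]
As $u$ varies, $(A,y)$ runs through all of $\Zg\times[0,\beta)$. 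Suppose the claimed $O(1/X)$ error held, and set $f=g-1/\lambda_2$.
\begin{enumerate}
  \item The Birkhoff sums of $f$ under rotation by $\beta$ would be uniformly bounded for starting points in $[0,\beta)$.
  \item Every orbit enters $[0,\beta)$ within a bounded number of steps, so the sums would be uniformly bounded on the whole circle.
  \item A weak-$*$ limit of Ces\`aro means of these sums would then give $\psi\in L^\infty(\R/\Z)$ with $f=\psi-\psi(\cdot+\beta)$.
  \item Since $\hat g(m)=1/(\log p_2-2\pi i m)$, this forces $|\hat\psi(q)|\ge|\hat g(q)|/(2\pi\|q\beta\|)\ge c>0$ along the convergent denominators $q$ of $\beta$, where $\|\cdot\|$ is the distance to the nearest integer. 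That contradicts Riemann--Lebesgue.
\end{enumerate}
The culprit is the unit jump of $g$ at $0$, which makes this a Hardy--Littlewood/Ostrowski sawtooth-type sum. It is the same oscillation you noticed at $r=1$, where $X\,T_{\{p\}}(X)=g(\{\log_p X\})$, surviving one averaging.

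Consequently, for every pair of primes, $X\,T_S(X)-\log X/(\lambda_1\lambda_2)$ is unbounded, and your induction has no valid base at $r=3$. Two things do survive:
\begin{itemize}
  \item the upper bound $T_S(X)\ll_S(\log X)^{r-1}/X$;
  \item the asymptotic $T_S(X)\sim(\log X)^{r-1}/((r-1)!\,X\prod_{p\in S}\log p)$, obtained from uniform Weyl equidistribution for the Riemann-integrable $g$ at $r=2$, followed by your Riemann-sum induction carried out with $o$-errors.
\end{itemize}
Whether the $O((\log X)^{r-2}/X)$ error holds for $r\ge3$ would need Diophantine averaging input that neither you nor the paper provides.
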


\begin{proof}
  By partial summation,
  \[
    T_S(X)=-\frac{\PsiS(X)}{X}+\int_X^{\infty} \frac{\PsiS(t)}{t^2}\,dt.
  \]
  By \Cref{lem:lattice-count},
  \[
    \PsiS(t)=\kS (\log t)^r+O_S\bigl((\log t)^{r-1}\bigr),
    \qquad
    \kS=\frac{1}{r!\prod_{p\in S}\log p}.
  \]
  Therefore,
  \[
    T_S(X)
    =-\kS\frac{(\log X)^r}{X}
    +\kS\int_X^{\infty}\frac{(\log t)^r}{t^2}\,dt
    +O_S\!\left(\frac{(\log X)^{r-1}}{X}\right).
  \]
  With the substitution $t=e^u$, we obtain
  \[
    \begin{aligned}
      \int_X^{\infty} \frac{(\log t)^r}{t^2}\,dt
       & =\int_{\log X}^{\infty} u^r e^{-u}\,du                             \\
       & =\frac{(\log X)^r+r(\log X)^{r-1}+O\bigl((\log X)^{r-2}\bigr)}{X}.
    \end{aligned}
  \]
  The leading terms cancel, and since $r\kS=((r-1)!\prod_{p\in S}\log p)^{-1}$, the stated asymptotic follows.
\end{proof}

\section{Non-periodic rational reciprocal sums}

\begin{proof}[Proof of \Cref{prop:intro-nonperiodic}]
  Choose distinct primes $p,q\in S$. Define
  \[
    c_k\coloneq
    \begin{cases}
      1, & \text{if }\ds_k=p^m\text{ for some }m\ge 1, \\
      0, & \text{otherwise.}
    \end{cases}
  \]
  Then
  \[
    \sum_{k=1}^\infty \frac{c_k}{\ds_k}=\sum_{m=1}^\infty \frac1{p^m}=\frac{1}{p-1}\in\Q.
  \]
  It remains to show that $(c_k)$ is not eventually periodic.

  Let $k_m$ denote the index such that $\ds_{k_m}=p^m$. For each integer
  \[
    1\le b\le \left\lfloor \frac{m}{\log_p q}\right\rfloor,
  \]
  set
  \[
    a_b\coloneq \lceil m-b\log_p q\rceil.
  \]
  Because $p$ and $q$ are distinct primes, the ratio $\log_p q=\log q/\log p$ is irrational; otherwise $p^u=q^v$ for some positive integers $u,v$, which is impossible. Therefore, the number $m-b\log_p q$ is not an integer, and
  \[
    m-b\log_p q<a_b<m-b\log_p q+1.
  \]
  Adding $b\log_p q$ gives
  \[
    m<a_b+b\log_p q<m+1.
  \]
  Exponentiating with base $p$ yields
  \[
    p^m<p^{a_b}q^b<p^{m+1}.
  \]
  These are distinct $S$-smooth integers lying strictly between $p^m$ and $p^{m+1}$; hence, the interval $(p^m,p^{m+1})$ contains at least $\lfloor m/\log_p q\rfloor$ distinct $S$-smooth numbers, so
  \[
    k_{m+1}-k_m\to\infty.
  \]
  In an eventually periodic binary sequence with infinitely many $1$'s, the gaps between successive $1$'s are eventually bounded by the period. Since the gaps here are unbounded, $(c_k)$ is not eventually periodic.
\end{proof}

\section{The classical case}

Throughout this section we specialize to $S=\{2,3\}$. Denote
\[
  \alpha\coloneq \alpha_{\{2,3\}},
\]
let
$
  d_1<d_2<\cdots
$
be the increasing sequence of positive $3$-smooth numbers, and write $f(k)\coloneq f_{\{2,3\}}(k)$.

Since
\[
  \cS=\left(1-\frac12\right)\left(1-\frac13\right)=\frac13,
\]
\Cref{cor:intro-full-density} gives
\[
  \alpha=\frac13\sum_{k\ge 1} \frac{\Deltaf(k)}{d_k},
  \qquad \Deltaf(k)\coloneq f(k)-f(k-1)\in\{0,1\}.
\]
The nested bounds in \Cref{cor:intro-bounds} become
\[
  L_N\coloneq \frac13\sum_{k=1}^N \frac{\Deltaf(k)}{d_k},
  \qquad
  U_N\coloneq L_N+\frac13\sum_{k>N}\frac1{d_k},
\]
with
\[
  L_N\le L_{N+1}\le \alpha\le U_{N+1}\le U_N.
\]

The general tail recursion becomes explicit.

\begin{corollary}\label{cor:explicit-tail-23}
  Let $X\ge 1$ and denote
  \[
    A\coloneq \lfloor \log_2 X\rfloor,
    \qquad
    b_a\coloneq 1+\left\lfloor \log_3\left(\frac{X}{2^a}\right)\right\rfloor
    \qquad (0\le a\le A).
  \]
  Then
  \[
    \sum_{\substack{u,v\ge 0\\ 2^u 3^v>X}} \frac{1}{2^u3^v}
    =\frac32\sum_{a=0}^{A}\frac{1}{2^a 3^{b_a}}+\frac{3}{2^{A+1}}.
  \]
  Consequently,
  \[
    U_N-L_N=\frac13\left(\frac32\sum_{a=0}^{A}\frac{1}{2^a 3^{b_a}}+\frac{3}{2^{A+1}}\right)
  \]
  when $X=d_N$.
\end{corollary}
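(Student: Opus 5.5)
The plan is to specialize \Cref{prop:tail-recursion} to $S=\{2,3\}$ with $p_1=2$ and $S'=\{3\}$, and then evaluate each one-prime tail using the closed form for $T_{\{p\}}$ given in the same proposition. With $A=\lfloor\log_2 X\rfloor$, the recursion gives
\[
  T_{\{2,3\}}(X)=\sum_{a=0}^{A}\frac1{2^a}T_{\{3\}}\!\left(\frac{X}{2^a}\right)+\frac{2^{-A}}{2-1}\left(1-\frac13\right)^{-1},
\]
and the last term equals $\tfrac32\cdot 2^{-A}=3/2^{A+1}$, which is the final term claimed.

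For the sum, note that for $0\le a\le A$ we have $X/2^a\ge 1$. The one-prime formula then gives
\[
  T_{\{3\}}(Y)=\frac{3^{-\lfloor\log_3 Y\rfloor}}{2}
\]
with $Y=X/2^a$. Since $b_a=1+\lfloor\log_3 Y\rfloor$, we have $3^{-\lfloor\log_3 Y\rfloor}=3\cdot 3^{-b_a}$, so $T_{\{3\}}(X/2^a)=\tfrac32\,3^{-b_a}$. Substituting this into the recursion yields $\tfrac32\sum_{a=0}^{A}2^{-a}3^{-b_a}$, and the identity follows.

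As a sanity check, I would directly confirm the one-prime formula: $\sum_{b>B}3^{-b}=3^{-B}/2$ where $B=\lfloor\log_3 Y\rfloor$. Here the strict inequality $3^b>Y$ is equivalent to $b\ge B+1$, and this equivalence holds even when $Y$ is an exact power of $3$.

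For the consequence, I would use $U_N-L_N=\cS T_S(\ds_N)$, noted after \Cref{cor:intro-bounds}, together with $\cS=1/3$ and $X=d_N$. One point to watch is that $T_S(X)$ is defined with the strict inequality $d>X$, matching the condition $2^u3^v>X$ in the stated sum. This is what makes the identity with $\sum_{k>N}1/d_k$ exact at $X=d_N$. The only place I expect care to be needed is the boundary bookkeeping with floors, namely whether $b_a$ or $b_a-1$ appears; it is settled by the strict-inequality check above.
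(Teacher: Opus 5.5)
Your proposal is correct and is essentially the paper's proof: you specialize \Cref{prop:tail-recursion} to $p_1=2$, $S'=\{3\}$, rewrite $T_{\{3\}}(X/2^a)=\tfrac32\,3^{-b_a}$ via the one-prime formula, and use $U_N-L_N=c_S\,T_S(d_N)$ with $c_S=1/3$. Your extra checks fill in details the paper leaves implicit: that $X/2^a\ge 1$ for $a\le A$, so the one-prime formula applies, and that the strict inequality $3^b>Y$ makes $b_a$ rather than $b_a-1$ the correct exponent.
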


\begin{proof}
  Apply \Cref{prop:tail-recursion} with $p_1=2$ and $S'=\{3\}$. Since
  \[
    T_{\{3\}}(Y)=\frac{3^{-\lfloor \log_3 Y\rfloor}}{2}=\frac{3}{2\cdot 3^{b_a}}
    \qquad\text{for }Y=\frac{X}{2^a},
  \]
  the stated identity follows.
\end{proof}

\begin{proposition}\label{prop:no-nested}
  There is no nested optimal family
  \[
    A_1\subseteq A_2\subseteq A_3\subseteq \cdots,
    \qquad
    A_k\subseteq \{d_1,\dots,d_k\},
    \qquad
    |A_k|=f(k),
  \]
  with every $A_k$ free of sets of the form $\{x,2x,3x\}$.
\end{proposition}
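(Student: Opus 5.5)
The plan is to exhibit an explicit small index at which nesting fails, by computing $f(k)$ and the optimal sets for the first few $3$-smooth numbers
\[
  d_1,\dots,d_8=1,2,3,4,6,8,9,12 .
\]
Within $\{d_1,\dots,d_8\}$, the configurations $\{x,2x,3x\}$ are exactly
\[
  \{1,2,3\},\quad\{2,4,6\},\quad\{3,6,9\},\quad\{4,8,12\}.
\]
The first two lie inside $\{d_1,\dots,d_6\}$. A subset is configuration-free if and only if its complement meets every such triple. Hence $f(k)$ equals $k$ minus the minimum size of a hitting set for the triples contained in $\{d_1,\dots,d_k\}$.

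\emph{Step 1: $k=6$.} The triples inside $\{1,2,3,4,6,8\}$ are $\{1,2,3\}$ and $\{2,4,6\}$. Since $8$ completes no triple there, $f(6)\le 5$. A single element hits both triples only if it lies in their intersection, which is $\{2\}$. Therefore $f(6)=5$, and the optimal set is unique:
\[
  A_6=\{1,3,4,6,8\}.
\]

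\emph{Step 2: $k=8$.} The triples $\{1,2,3\}$ and $\{4,8,12\}$ are disjoint, so any hitting set has size at least $2$. The set $\{3,4\}$ meets all four triples: $3$ meets $\{1,2,3\}$ and $\{3,6,9\}$, while $4$ meets $\{2,4,6\}$ and $\{4,8,12\}$. Hence
\[
  f(8)=8-2=6.
\]

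\emph{Step 3: contradiction.} Suppose a nested optimal family exists. Then $A_8\supseteq A_6=\{1,3,4,6,8\}$ and $|A_8|=6$. So $A_8$ contains $A_6$ together with exactly one extra element from $\{d_1,\dots,d_8\}\setminus A_6=\{2,9,12\}$.
\begin{itemize}
  \item Adding $2$ creates $\{1,2,3\}$.
  \item Adding $9$ creates $\{3,6,9\}$.
  \item Adding $12$ creates $\{4,8,12\}$.
\end{itemize}
Each choice yields a forbidden configuration, which is a contradiction. Note that $d_7=9$ is irrelevant to the argument, since only $A_6\subseteq A_8$ is used.

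The only step requiring care is the uniqueness of the optimal set at $k=6$, which is immediate from the intersection argument in Step 1. The rest is a finite check.
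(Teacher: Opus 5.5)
Your proof is correct, but it locates the obstruction in a different place from the paper.

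The paper stays inside $\{d_1,\dots,d_5\}=\{1,2,3,4,6\}$. Since $f(2)=2$, the set $A_2$ must be the whole prefix $\{1,2\}$, while $\{1,3,4,6\}$ is the unique optimal set at $k=5$. So $2$ would have to lie in $A_2$ but not in $A_5$. The paper phrases this by propagating downward: $A_4=\{1,3,4\}$, $A_3=\{1,3\}$, and then $A_2=\{1,2\}\not\subseteq A_3$.

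Thus the paper's contradiction is that an early forced set does not sit inside a later forced set. Yours is that a forced set is a dead end: the unique optimum $A_6=\{1,3,4,6,8\}$ cannot absorb any of $2,9,12$, although $f(8)=f(6)+1$.

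The paper's version needs less checking: two triples and the trivial value $f(2)=2$. Yours needs all four triples up to $d_8=12$ and the disjoint-triples lower bound for the hitting number. In exchange, it does not rely on the degenerate stage where $A_k$ is the entire prefix. Concretely, the paper's argument does not exclude a nested optimal family starting at $k=3$. Indeed, $\{1,3\}\subseteq\{1,3,4\}\subseteq\{1,3,4,6\}\subseteq\{1,3,4,6,8\}$ are optimal for $k=3,4,5,6$. Your argument rules out nested optimal families starting at any index $k_0\le 6$.

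One small slip: ``since $8$ completes no triple there, $f(6)\le 5$'' is a non sequitur. The bound $f(6)\le 5$ comes from $\{1,2,3\}\subseteq\{d_1,\dots,d_6\}$, which your hitting-set argument supplies anyway.
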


\begin{proof}
  The initial values are
  \[
    f(1)=1,\qquad f(2)=2,\qquad f(3)=2,\qquad f(4)=3,\qquad f(5)=4,
  \]
  and
  \[
    \{d_1,d_2,d_3,d_4,d_5\}=\{1,2,3,4,6\}.
  \]
  The unique optimal subset of size $4$ is
  \[
    \{1,3,4,6\}.
  \]
  Indeed, every other $4$-subset contains either $\{1,2,3\}$ or $\{2,4,6\}$.
  Hence a nested optimal family would have to satisfy
  \[
    A_5=\{1,3,4,6\}.
  \]
  Since $A_4\subseteq \{1,2,3,4\}$ and $|A_4|=3$, this forces
  \[
    A_4=\{1,3,4\}.
  \]
  Likewise $A_3\subseteq \{1,2,3\}$ and $|A_3|=2$, so $A_3=\{1,3\}$. But $A_2\subseteq \{1,2\}$ and $|A_2|=2$ implies
  \[
    A_2=\{1,2\},
  \]
  which is not contained in $A_3$ -- a contradiction.
\end{proof}

\begin{proposition}\label{prop:local-update}
  Let $d_k=2^a3^b$.
  \begin{enumerate}
    \item If $b=0$, then adding $d_k$ to the previous prefix creates no new forbidden triple, and therefore
          \[
            \Deltaf(k)=1.
          \]
    \item If $b>0$, then the only new forbidden triple created at stage $k$ is
          \[
            \{2^a3^{b-1},2^{a+1}3^{b-1},2^a3^b\}.
          \]
          Consequently,
          \[
            \Deltaf(k)=1
          \]
          if and only if there exists an optimal subset of $\{d_1,\dots,d_{k-1}\}$ omitting at least one of the two lower points
          \[
            2^a3^{b-1},\qquad 2^{a+1}3^{b-1}.
          \]
  \end{enumerate}
\end{proposition}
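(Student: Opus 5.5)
The plan is to classify which forbidden triples inside $\{d_1,\dots,d_k\}$ contain the new element $d_k$, and then combine this with the trivial bounds $f(k-1)\le f(k)\le f(k-1)+1$.

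\textbf{Step 1: the new triples.} A forbidden triple $\{x,2x,3x\}$ contained in $\{d_1,\dots,d_k\}$ but not in $\{d_1,\dots,d_{k-1}\}$ must contain $d_k$. Since $d_k$ is the largest element of the prefix, and $x<2x<3x$, $d_k$ must be the largest element $3x$ of the triple. The alternatives are excluded as follows.
\begin{itemize}[label={}, leftmargin=1em]
\item If $d_k=x$, then $2x,3x>d_k$, so they are not in the prefix.
\item If $d_k=2x$, then $3x>d_k$, so it is not in the prefix.
\end{itemize}
Thus $d_k=3x$, which requires $3\mid d_k$, i.e.\ $b>0$. In that case $x=2^a3^{b-1}$ and $2x=2^{a+1}3^{b-1}$. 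Both are $3$-smooth and smaller than $d_k$, so both lie in $\{d_1,\dots,d_{k-1}\}$. This shows there is no new triple when $b=0$, and exactly the stated triple when $b>0$.

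\textbf{Step 2: the trivial bounds.} We have $f(k-1)\le f(k)$ by monotonicity. For the other direction, removing $d_k$ from an optimal subset of the $k$-prefix gives a configuration-free subset of the $(k-1)$-prefix, so $f(k)\le f(k-1)+1$. Hence $\Deltaf(k)\in\{0,1\}$, and $\Deltaf(k)=1$ if and only if some configuration-free $B\subseteq\{d_1,\dots,d_k\}$ has $|B|=f(k-1)+1$. Any such $B$ must contain $d_k$, since otherwise it would be a subset of the $(k-1)$-prefix larger than $f(k-1)$.

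\textbf{Step 3: the equivalence.} Write $B=B'\cup\{d_k\}$, where $B'\subseteq\{d_1,\dots,d_{k-1}\}$ is configuration-free of size $f(k-1)$, i.e.\ optimal. By Step 1, $B$ is configuration-free if and only if $B'$ is configuration-free and $B$ avoids every new triple. So $B$ is configuration-free exactly when $B'$ is:
\begin{itemize}[label={}, leftmargin=1em]
\item any optimal set, if $b=0$;
\item an optimal set omitting at least one of $2^a3^{b-1}$ and $2^{a+1}3^{b-1}$, if $b>0$.
\end{itemize}

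For $b=0$, taking $B'$ to be any optimal set gives $\Deltaf(k)=1$, which is part (1). For $b>0$, this gives both directions of the equivalence in part (2).

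No step is a real obstacle. The only point needing care is the ordering argument in Step 1: it uses that $d_k$ is the maximum of the prefix and that the elements $x,2x,3x$ are strictly increasing.
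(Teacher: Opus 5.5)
Your proof is correct and follows essentially the same route as the paper. The paper works in exponent coordinates: it lets $d_k$ occupy the anchor, east, or north position of a corner $\{(x,y),(x+1,y),(x,y+1)\}$ and compares weights $x\log 2+y\log 3$, which is your case split $d_k\in\{x,2x,3x\}$ via $x<2x<3x$. It then proves the equivalence by adjoining $d_k$ to, or removing it from, an optimal set, as in your Step~3, though you state the bound $f(k-1)\le f(k)\le f(k-1)+1$ explicitly where the paper leaves it implicit.
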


\begin{proof}
  Write the new point in exponent coordinates as $(a,b)$. A forbidden triple corresponds to a corner
  \[
    \{(x,y),(x+1,y),(x,y+1)\}.
  \]
  A corner containing $(a,b)$ can occur in at most three ways.

  If $(a,b)$ is the anchor, then the corner is
  \[
    \{(a,b),(a+1,b),(a,b+1)\},
  \]
  and both other points correspond to larger $3$-smooth numbers, so neither belongs to the previous prefix.

  If $(a,b)$ is the east point, then the corner is
  \[
    \{(a-1,b),(a,b),(a-1,b+1)\}.
  \]
  The point $(a-1,b+1)$ has larger weight
  \[
    (a-1)\log 2+(b+1)\log 3>a\log 2+b\log 3,
  \]
  so it is not yet present.

  If $(a,b)$ is the north point, then the corner is
  \[
    \{(a,b-1),(a+1,b-1),(a,b)\}.
  \]
  When $b>0$, both lower points have smaller weight and therefore already belong to the previous prefix. This is the unique new forbidden triple.

  If $b=0$, the third case does not occur, so no new forbidden triple is created and $\Deltaf(k)=1$.

  Assume now that $b>0$. If there is an optimal subset of the previous prefix omitting one of the two lower points, then adjoining $d_k$ preserves the forbidden-triple-free property, so the optimum increases by $1$. Conversely, if $\Deltaf(k)=1$, then some optimal set at stage $k$ contains the new point $d_k$; removing it leaves an optimal set at stage $k-1$ that omits at least one of the two lower points, otherwise the unique new triple would already be present.
\end{proof}

\section*{Supplementary material}
The code and data used for the numerical calculations in this paper are available at
\url{https://github.com/nikolaveselinov/s-smooth-forbidden-configurations}.

\nocite{*}
\bibliographystyle{plain}
\bibliography{refs}

\end{document}